\newtheorem{thm}{Theorem}
\newtheorem{rem}{Remark}
\newtheorem{exam}{Example}
\begin{document}

\title{An efficient global optimization algorithm for maximizing the sum of two generalized Rayleigh quotients
\thanks{This research was supported by National Natural Science Foundation of China under grants
11471325 and 11571029.} }


\titlerunning{Maximizing the sum of two Rayleigh quotients}        

\author{Xiaohui Wang  \and Longfei Wang\and Yong  Xia}


\institute{X. Wang \at  School of Astronautics,  Beihang University,  Beijing,
 100191, P. R. China \email{xhwang@buaa.edu.cn}
           \and
           L.F. Wang \and  Y. Xia \at \at
State Key Laboratory of Software Development Environment, LMIB of
the Ministry of Education, School of Mathematics and System
Sciences, Beihang University, Beijing 100191, P. R. China  \email{kingdflying999@126.com (L.F. Wang); dearyxia@gmail.com (Y. Xia)}
 }

\date{Received: date / Accepted: date}
\maketitle
\begin{abstract}
Maximizing the sum of two generalized Rayleigh quotients (SRQ) can be reformulated as a one-dimensional optimization problem, where the function value evaluations are reduced to solving semi-definite programming (SDP) subproblems. In this paper, we first use the optimal value of the dual SDP subproblem to construct a new saw-tooth-type overestimation. Then, we propose an efficient branch-and-bound algorithm to globally solve (SRQ),
which is shown to find an $\epsilon$-approximation optimal solution of (SRQ) in at most O$\left(\frac{1}{\epsilon}\right)$ iterations. Numerical results demonstrate that it is even more efficient than the recent SDP-based heuristic algorithm.

\keywords{: fractional programming, Rayleigh quotient,
semidefinite programming, branch and bound. }
 \subclass{ 90C32 \and  90C26 \and  90C22}
\end{abstract}

\section{Introduction}
The problem of maximizing the sum of two generalized Rayleigh quotients
\begin{align}\label{sum-of-two}
{\rm(SRQ)}~~ \max_{x\ne0}\dfrac{x^TBx}{x^TWx}+\dfrac{x^TDx}{x^TVx}
\end{align}
with positive definite matrices $W$ and $V$,
has recent applications in the multi-user MIMO
system \cite{PG} and the sparse Fisher discriminant analysis in
pattern recognition \cite{DFB,FM,WZW}. Without loss of generality, we can assume that $V$ is identity. Otherwise, we reformulate (\ref{sum-of-two}) as a problem in terms of $y$ by substituting $x=V^{-\frac{1}{2}}y$.
Moreover, since the objective function in (\ref{sum-of-two}) is homogeneous, (SRQ) can be further recast as the following sphere-constrained optimization problem, which is first  proposed by Zhang \cite{Hong,HZ}:
\begin{equation*}
{\rm(P)}~~ \begin{array}{lll}
&\max_{x\in \Bbb R^n}& f(x)
=\dfrac{x^TBx}{x^TWx}+x^TDx\\
&{\rm s.t.} & \|x\|= 1,
\end{array}
\end{equation*}
where $\|\cdot\|$ denotes the $\ell_2$-norm throughout this paper.

The single generalized Rayleigh quotient optimization problem (i.e., (SRQ) with $B=0$) is related to the classical eigenvalue problem and solved in polynomial time \cite{ZYL}.
However, to our best knowledge, whether the general (SRQ) (or (P)) can be efficiently solved in polynomial time remains open. Actually, as shown in [\cite{Hong}, Example 1.1], there could exist a few local  non-global maximizers of (P). Moreover, even finding the critical point of (P) is nontrivial, see  \cite{Hong,HZ}.

Recently,  (P) is reformulated as the problem of maximizing the following one-dimensional function \cite{NRX}:
\begin{equation}
({\rm P}_1)~~\max_{\mu\in [\underline{\mu},\bar{\mu}]}~q(\mu):=\mu+g(\mu), \label{oned}
\end{equation}
where $g(\mu)$ is related to a non-convex quadratic optimization:
\begin{equation}\label{P_mu}
\begin{array}{lll}
g(\mu)=&\max_{x\in \Bbb R^n}&x^TDx\\
&{\rm s.t.} & \|x\|= 1\\
&\ & x^T(B-\mu W)x\ge0
\end{array}
\end{equation}
and the lower and upper bounds
\begin{equation}\label{mu-bar}
\underline{\mu}=\min_{\|x\|=1} \dfrac{x^TBx}{x^TWx}, \ \
\bar{\mu}=\max_{\|x\|=1} \dfrac{x^TBx}{x^TWx}
\end{equation}
are the smallest and the largest generalized eigenvalues of the
matrix pencil $(B,W),$ respectively. In order to solve the one-dimensional problem (\ref{oned}),  a ``two-stage" heuristic algorithm is proposed in \cite{NRX} by first subdividing
$[\underline{\mu},\bar{\mu}]$ into coarse intervals such that each one contains a local maximizer of $q(\mu)$ and then applying the quadratic fit line search
\cite{An,Baz,Lu} in each interval.
For any given $\mu$, $g(\mu)$ (or $q(\mu)$) can be evaluated by solving an equivalent semi-definite programming (SDP) formulation, according to an extended version of S-Lemma in [\cite{Poly}, Proposition 4.1, see also [\cite{D}, Theorem 5.17]]. Finally,
for the returned optimal solution $\mu^*$, the optimal vector solution of (P) is recovered by a rank-one decomposition procedure [\cite{NRX}, Theorem 3]. Though this ``two-stage" algorithm could find the global solutions of the tested examples,  it is still a heuristic algorithm since the function $q(\mu)$ is not guaranteed to be quasi-concave. Besides, there is no meaningful stopping criterion for the ``two-stage" algorithm. That is, we cannot estimate the gap between the obtained solution and the global maximizer of (P$_1$).

In this paper, we propose an easy-to-evaluate  function for upper bounding $q(\mu)$. It provides saw-tooth-curve upper bounds of $q(\mu)$ over $[\underline{\mu},\bar{\mu}]$, which are used to establish an efficient branch-and-bound algorithm. We further show that the new algorithm
returns an $\epsilon$-approximation optimal solution of (P$_1$)
in at most $O\left(\frac{1}{\epsilon}\right)$ iterations. Numerical results show that the new algorithm is even much more efficient than the ``two-stage" heuristic algorithm \cite{NRX}.

The remainder of this paper is organized as follows. In Section 2, we give some preliminaries on the evaluation of $g(\mu)$. In Section 3, we propose an easy-to-compute upper bounding function, which provides saw-tooth-curve upper bounds of $g(\mu)$. In Section 4, we establish a new branch-and-bound algorithm and estimate the worst-case
computational complexity. In Section 5,
we do numerical comparison experiments, which demonstrate the efficiency of our new algorithm.
Conclusions are made in Section 6.

Throughout the paper, $v(\cdot)$ denotes the optimal objective value of the problem $(\cdot)$. We use $A\succeq (\preceq)0$ to stand for a positive (negative) semi-definite matrix $A$. The positive definite matrix $A$ is denoted by $A\succ 0$. Let $\lambda_{\max}(A)$ and $\lambda_{\min}(A)$ be the maximal and minimal eigenvalue of $A$, respectively. The inner product of two matrices $A$ and $B$ is denoted by $A\bullet B=$trace$(AB^T)$. For a real number $a$, $\lfloor a\rfloor$ returns the largest integer less than or equal to $a$.

\section{Preliminaries}

In the section, we first show how to evaluate $g(\mu)$. Then, we present the ``two-stage" algorithm \cite{NRX} to maximize $q(\mu)$ (\ref{oned}). Finally, we discuss how to get the optimal vector solution of (P) from the maximizer of $q(\mu)$.

Lifting $xx^T$ to $X\in \Bbb R^{n\times n}$ (since $x^TAx=A\bullet (xx^T)$) yields
the primal SDP relaxation of the optimization problem of evaluating $(g_{\mu})$ for any given $\mu$:
\begin{eqnarray*}
({\rm SDP}_{\mu})~~
&\max & D \bullet X\\
&{\rm s.t.} & I\bullet X= 1\\
& & (B-\mu W)\bullet X\ge0\\
& & X\succeq 0.
\end{eqnarray*}
The
conic dual problem of $({\rm SDP}_{\mu})$ is
\begin{eqnarray*}
({\rm SD}_{\mu})~~ &\min &\nu\\
&{\rm s.t.} &D -\nu I+\eta(B-\mu W)\preceq0\\
& & \eta\ge0,
\end{eqnarray*}
which coincides with the
Lagrangian dual problem of $g({\mu})$.

It is trivial to see that $({\rm SD}_{\mu})$ has an interior feasible solution, i.e., the Slater's condition holds. We can verify that,
for any $\mu$ satisfying
\begin{equation}
\mu<\bar{\mu},\label{ass}
\end{equation}
 the Slater's condition holds for $({\rm SDP}_{\mu})$, i.e., there is an $X\succ 0$ such that $I\bullet X= 1$ and
$(B-\mu W)\bullet X>0$. Therefore, under the assumption (\ref{ass}), strong duality holds for $({\rm SDP}_{\mu})$, that is, $v({\rm SDP}_{\mu})=v({\rm SD}_{\mu})$ and both optimal values are attained.

Under the assumption (\ref{ass}), by further
applying the extended version of S-Lemma in [\cite{Poly}, Proposition 4.1, see also [\cite{D}, Theorem 5.17]], we can show that the strong duality holds for the optimization problem of evaluating $g({\mu})$, i.e.,
$g({\mu})=v({\rm SD}_{\mu})$. For more details, we refer to \cite{NRX}.

Next, we present the ``two-stage" algorithm proposed in \cite{NRX} for solving (\ref{oned}). Firstly, it partitions
$[\underline{\mu},\bar{\mu}]$ into a rather coarse mesh and then collects all subintervals containing an interior local maximizer.
In the second stage, the quadratic fit method
\cite{Baz,An,Lu} is applied to find a corresponding local maximizer in each subinterval that has been collected in the first stage. Finally,  the optimal solution $\mu^*$ is selected from all these obtained local maximizers. In this paper, we will not present the detailed quadratic fit line search subroutine, which can be found in \cite{NRX}. One of the reason is that the algorithm in the first stage is already quite time-consuming.

{\bf The ``two-stage" scheme proposed in \cite{NRX}}

\begin{enumerate}
\item[$\empty$]
\begin{itemize}
\item[Step 1.] Given $\delta>0.$
Let $\mu_0=\underline{\mu}$ and $\mu_i=\underline{\mu}+(i-1)\delta$ for $i=1,2,\ldots,\lfloor\frac{\bar{\mu}-\underline{\mu}}
{\delta}\rfloor+1$. If $\frac{\bar{\mu}-\underline{\mu}}
{\delta}$ is not an integer, set $\mu_k=\bar{\mu}$ for
$k=
\lfloor\frac{\bar{\mu}-\underline{\mu}}
{\delta}\rfloor+2$.
\item[Step 2.]  For $i=1,2,\ldots,$
collect all the three-point pattern
$[\mu_{i-1},\mu_i,\mu_{i+1}]$ such that $\max\{q(\mu_{i-1}),q(\mu_{i+1})\}\le q(\mu_i)$.
\item[Step 3.] Call the quadratic fit line search subroutine (with a smaller tolerance than $\delta$) to
find a corresponding local maximizer in
each three-point pattern $[\mu_{i-1},\mu_i,\mu_{i+1}]$.
\item[Step 4.] Select the best maximizer $\mu^*$ among $\underline{\mu}$, $\bar{\mu}$, and all the local maximizers found in Step 3.
\end{itemize}
\end{enumerate}

Suppose (\ref{oned}) is solved, let $\mu^*$ be the returned maximizer.  If $\mu^*=\bar{\mu}$, the feasible region of (\ref{P_mu}) is reduced to
\[
\|x\|=1,~(B-\mu^*W)x=0,
\]
which contains only the unit eigenvector corresponding to the maximal eigenvalue. In this case, $g(\mu^*)$ is actually a maximum eigenvalue problem. On the other hand, suppose $\mu^*<\bar{\mu}$,
 the optimal vector solution of (P) is recovered from the equivalent $({\rm SDP}_{\mu^*})$ based on the rank one constraint, by using a rank-one procedure similar to
that in \cite{SZ,Y}, see details in \cite{NRX}.

 There is an alternative approach to recover the optimal solution of (P). Let $(\nu^*,\eta^*)$ be the optimal solution of the dual problem $({\rm SD}_{\mu^*})$. It is not difficult to verify that
\[
g(\mu^*)=\max_{\|x\|= 1}~ x^T(D-\eta^*(B-\mu^* W))x=\lambda_{\max}(D-\eta^*(B-\mu^* W)).
\]
Consequently, the optimal vector solution of (P) is the unit eigenvector corresponding to the maximum eigenvalue of $D-\eta^*(B-\mu^* W)$.

\section{Saw-tooth upper bounds}
In this section, we propose an easy-to-evaluate upper bounding function, which provides saw-tooth upper bounds for $q(\mu)$ over $[\underline{\mu},\bar{\mu}]$.

Let $\cup_{i=1}^k[\mu_i,\mu_{i+1}]$ be a partition of $[\underline{\mu},\bar{\mu}]$, where $\mu_1=\underline{\mu}$ and $\mu_{k+1}=\bar{\mu}$.

Consider the interval $[\mu_i,\mu_{i+1}]$ with $i\le k-1$ (so that
 $\mu_{i+1}<\bar{\mu}$).
Solve $({\rm SD}_{\mu})$ with $\mu=\mu_i,\mu_{i+1}$ and denote the optimal solutions by $(\nu_i,\eta_i)$ and $(\nu_{i+1},\eta_{i+1})$, respectively. Then, we have $\eta_{i}\ge 0$, $\eta_{i+1}\ge0$, and
\[
q(\mu_{i})=\mu_{i}+\nu_{i},~
q(\mu_{i+1})=\mu_{i+1}+\nu_{i+1}.
\]
For any $\mu\in[\mu_i,\mu_{i+1}]$, it follows from  the strong duality that
\begin{eqnarray}
q(\mu)&=&\mu+ \min_{\eta\ge0}\max_{\|x\|= 1} x^TDx+\eta(x^T(B-\mu W)x)\nonumber\\
&\le& \mu+ \max_{\|x\|= 1} x^TDx+\eta_i(x^T(B-\mu W)x)\nonumber\\
&=& \mu_i+ \max_{\|x\|= 1} \{x^TDx+\eta_i(x^T(B-\mu_i W)x)+\mu-\mu_i
+\eta_i(\mu_i-\mu)  x^TWx\}\nonumber\\
&\leq& q(\mu_i)+\mu-\mu_i
+\eta_i(\mu_i-\mu)  \max_{\|x\|= 1} x^TWx\nonumber\\
&\le& q(\mu_i)+\mu-\mu_i
+\eta_i(\mu_i-\mu)\min_{\|x\|= 1}x^TWx\nonumber\\
&=& q(\mu_i)+\mu-\mu_i
+\eta_i(\mu_i-\mu)\lambda_{\min}(W) \label{ub1}\\
&:=& q_1(\mu). \label{ub11}
\end{eqnarray}
Similarly, we have
\begin{equation}
q(\mu) \le q(\mu_{i+1})+\mu-\mu_{i+1}
+\eta_{i+1}(\mu_{i+1}-\mu)\lambda_{\max}(W)
:=q_2(\mu).
\label{ub2}
\end{equation}
Now, we obtain an upper bounding function of $q(\mu)$ over $[\mu_i,\mu_{i+1}]$:
\begin{equation}
\bar{q}(\mu)= \min\{q_1(\mu),q_2(\mu)\},
\label{ub}
\end{equation}
which is a concave function as $q_1(\mu)$ and $q_2(\mu)$ are both linear functions.
It provides the following upper bound of $q(\mu)$ over $[\mu_i,\mu_{i+1}]$:
\begin{equation}
U_i=\max_{\mu\in[\mu_i,\mu_{i+1}]} \bar{q}(\mu).
\label{ubb}
\end{equation}
Problem (\ref{ubb}) is a convex program. Moreover, it has a closed-form solution.
\begin{thm}\label{thm:b}
Under the assumption $\mu_{i+1}<\bar{\mu}$, an upper bound of $q(\mu)$ over $[\mu_i,\mu_{i+1}]$ is given by
\begin{eqnarray}
U_i=\left\{
\begin{array}{ll}
q(\mu_i),& {\rm if}~\eta_i\lambda_{\min}(W)\ge1       \label{bise1}\\
q(\mu_{i+1}),& {\rm if}~\eta_{i+1}\lambda_{\max}(W)\le1  \label{bise2}\\
q_1(\mu_0),& {\rm otherwise,}   \label{adapt}\\
\end{array}
\right.
\end{eqnarray}
where
\[
\mu_0=\frac{q(\mu_{i+1})-\mu_{i+1}
+\eta_{i+1}\mu_{i+1}\lambda_{\max}(W)-
q(\mu_i)+\mu_i
-\eta_i\mu_i\lambda_{\min}(W)}
{\eta_{i+1}\lambda_{\max}(W)-
\eta_i\lambda_{\min}(W)}.
\]
\end{thm}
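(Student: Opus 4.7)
The plan is to analyze the piecewise linear concave function $\bar{q}(\mu)=\min\{q_1(\mu),q_2(\mu)\}$ by reading off the slopes of $q_1$ and $q_2$ from (\ref{ub1}) and (\ref{ub2}), and then doing a three-case analysis driven entirely by the signs of those slopes. Writing
\[
q_1(\mu)=\bigl[q(\mu_i)-\mu_i+\eta_i\mu_i\lambda_{\min}(W)\bigr]+\bigl[1-\eta_i\lambda_{\min}(W)\bigr]\mu,
\]
\[
q_2(\mu)=\bigl[q(\mu_{i+1})-\mu_{i+1}+\eta_{i+1}\mu_{i+1}\lambda_{\max}(W)\bigr]+\bigl[1-\eta_{i+1}\lambda_{\max}(W)\bigr]\mu,
\]
we see that the slope of $q_1$ has sign opposite to $\eta_i\lambda_{\min}(W)-1$ and similarly for $q_2$. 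Note also the endpoint identities $q_1(\mu_i)=q(\mu_i)$ and $q_2(\mu_{i+1})=q(\mu_{i+1})$, together with the cross-inequalities $q_2(\mu_i)\ge q(\mu_i)=q_1(\mu_i)$ and $q_1(\mu_{i+1})\ge q(\mu_{i+1})=q_2(\mu_{i+1})$, which follow directly from the fact that $q_1$ and $q_2$ are global upper bounds of $q$ over $[\mu_i,\mu_{i+1}]$.

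The three cases then match the statement. If $\eta_i\lambda_{\min}(W)\ge 1$, the line $q_1$ is non-increasing, so for every $\mu$ in the interval
\[
\bar{q}(\mu)\le q_1(\mu)\le q_1(\mu_i)=q(\mu_i),
\]
and this bound is attained at $\mu=\mu_i$ because $\bar{q}(\mu_i)=\min\{q(\mu_i),q_2(\mu_i)\}=q(\mu_i)$ by the cross-inequality; hence $U_i=q(\mu_i)$. The case $\eta_{i+1}\lambda_{\max}(W)\le 1$ is symmetric, with $q_2$ non-decreasing and the maximum attained at $\mu_{i+1}$.

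In the remaining case $\eta_i\lambda_{\min}(W)<1<\eta_{i+1}\lambda_{\max}(W)$, the function $q_1$ is strictly increasing and $q_2$ strictly decreasing, so $\bar{q}$ is a concave ``tent'' that attains its maximum exactly at the unique intersection point $\mu_0$ solving $q_1(\mu_0)=q_2(\mu_0)$. Solving this linear equation yields the claimed formula; the denominator is positive because $\eta_{i+1}\lambda_{\max}(W)-\eta_i\lambda_{\min}(W)>0$ in this case. It remains to verify $\mu_0\in[\mu_i,\mu_{i+1}]$, which I expect to be the only slightly delicate step: using the cross-inequalities $q_1(\mu_i)\le q_2(\mu_i)$ and $q_2(\mu_{i+1})\le q_1(\mu_{i+1})$ together with the strict monotonicities, the continuous function $q_1-q_2$ changes sign on $[\mu_i,\mu_{i+1}]$, so the intersection indeed lies in the interval by the intermediate value theorem. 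Plugging $\mu_0$ into $q_1$ then gives $U_i=q_1(\mu_0)$, completing the argument.

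The main obstacle is largely bookkeeping: keeping track of which line is the active upper envelope at each endpoint, and confirming that the two special cases exhaust all situations where the unconstrained intersection would fall outside $[\mu_i,\mu_{i+1}]$ so that no additional boundary case is hidden. Once the sign analysis of the two slopes and the cross-inequalities at the endpoints are laid out carefully, each case follows by a short one-line calculation.
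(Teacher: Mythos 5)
Your proof is correct and follows exactly the route the paper intends: the paper omits the proof as ``trivial'' precisely because it reduces to the sign analysis of the two linear slopes and the intersection point $\mu_0$, which is what you carry out in detail. Your endpoint cross-inequalities $q_2(\mu_i)\ge q(\mu_i)$ and $q_1(\mu_{i+1})\ge q(\mu_{i+1})$, used to show attainment and to place $\mu_0$ inside $[\mu_i,\mu_{i+1}]$, are the right way to make the omitted argument rigorous.
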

\begin{proof}
The trivial proof is omitted as both $q_1(\mu)$ and $q_2(\mu)$ are linear functions and $\mu_0$ is the unique solution of the equation $q_1(\mu)=q_2(\mu)$.
\end{proof}

Finally, we also have a simple estimation of the upper bound $U_i$.
\begin{thm}\label{thm:b2}
For any $\mu\ge \mu_i$, we have
\begin{equation}
q(\mu) \le q(\mu_{i})+\mu-\mu_{i}.
\label{ub:es}
\end{equation}
\end{thm}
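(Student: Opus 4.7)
The plan is to recognize Theorem~\ref{thm:b2} as a weakening of the bound $q_1(\mu)$ already derived in (\ref{ub1}). The chain of inequalities producing (\ref{ub1}) ends at
\[
q(\mu) \le q(\mu_i) + (\mu - \mu_i) + \eta_i(\mu_i - \mu)\lambda_{\min}(W),
\]
and the last summand is nonpositive whenever $\mu \ge \mu_i$, since $\eta_i \ge 0$, $\mu_i - \mu \le 0$, and $\lambda_{\min}(W) > 0$ (because $W \succ 0$). Discarding this term yields (\ref{ub:es}) immediately.

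To keep the argument self-contained and to cover values $\mu \ge \mu_i$ that may exceed $\mu_{i+1}$ (the range originally assumed in the derivation of (\ref{ub1})), I would write out a direct three-line version. First, weak duality for $({\rm SD}_\mu)$ with the dual-feasible multiplier $\eta_i \ge 0$ gives, for every $\mu$,
\[
g(\mu) \le \max_{\|x\|=1} \bigl[ x^T D x + \eta_i\, x^T (B - \mu W) x \bigr].
\]
Next, splitting $B - \mu W = (B - \mu_i W) - (\mu - \mu_i) W$ inside the maximum produces the extra summand $\eta_i(\mu_i - \mu)\, x^T W x$, which under the hypothesis $\mu \ge \mu_i$ is $\le 0$ for every unit vector $x$ and so may be deleted pointwise from inside the max. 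Finally, the surviving maximum equals $\nu_i = g(\mu_i) = q(\mu_i) - \mu_i$ by strong duality at $\mu_i$, and adding $\mu$ from $q(\mu) = \mu + g(\mu)$ gives the desired $q(\mu) \le q(\mu_i) + (\mu - \mu_i)$.

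There is no real obstacle, which is why the authors regard the proof as trivial. The step that required the sandwich between $\lambda_{\max}(W)$ and $\lambda_{\min}(W)$ in the earlier derivation is sidestepped here: because the term we wish to remove is pointwise nonpositive on the unit sphere, it can be deleted directly from inside the maximum, with no appeal to spectral bounds on $W$. The only structural prerequisite is strong duality at $\mu_i$, which holds as long as $\mu_i < \bar\mu$; the degenerate case $\mu_i = \bar\mu$ was already addressed separately in Section~2 as the single-eigenvector situation and plays no role in the bounding scheme.
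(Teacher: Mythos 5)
Your proof is correct and takes essentially the same route as the paper's: the authors likewise obtain (\ref{ub:es}) by discarding the term $\eta_i(\mu_i-\mu)\lambda_{\min}(W)$ from the bound $q_1(\mu)$ in (\ref{ub11}), which is nonpositive because $\eta_i\ge 0$, $\mu\ge\mu_i$, and $\lambda_{\min}(W)>0$. Your added self-contained version is a sensible refinement, since it makes explicit that only weak duality at $\mu$ and strong duality at $\mu_i$ are needed, so the estimate really does hold for all $\mu\ge\mu_i$ and not just on $[\mu_i,\mu_{i+1}]$.
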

\begin{proof}
The inequality (\ref{ub:es}) follows from the definition $q_1(\mu)$ (\ref{ub11}) and the facts that $\eta_i\ge0$ and $\lambda_{\min}(W)>0$ (as $W\succ 0$).
\end{proof}
\begin{rem}
The estimation (\ref{ub:es}) is independent of $\mu_{i+1}$. Therefore, it can be satisfied for the extended case $\mu_{i+1}=\bar{\mu}$.
\end{rem}

\section{A saw-tooth branch-and-bound algorithm}

In this section, we first propose a branch-and-bound algorithm based on  the  new saw-tooth-curve upper bounds and then establish the worst-case computational complexity of the new algorithm.

Our algorithm works on a list
\begin{equation}
\underline{\mu}=\mu_1<\cdots<\mu_{k+1}=\bar{\mu}.
\label{list}
\end{equation}
The initial list is $\underline{\mu}=\mu_1<\mu_2=\bar{\mu}$.
In each iteration, we first
select the interval $[\mu_i,\mu_{i+1}]$ from the $\{\mu\}$-list that provides the maximal upper bound $U_i$ (\ref{ubb}). Then, we insert the mid-point $\frac{\mu_i+\mu_{i+1}}{2}$ into the $\{\mu\}$-list (\ref{list}) and increase  $k$ by one. The process is repeated until the stopping criterion is reached. The detailed algorithm is presented as follows.

{\bf The saw-tooth branch-and-bound algorithm}
\begin{enumerate}
\item[$\empty$]
\begin{itemize}
\item[Step 0.]
Given the approximation error $\epsilon>0$.
Compute $\underline{\mu}$, $\bar{\mu}$ (\ref{mu-bar}), $\lambda_{\min}(W)$ and $\lambda_{\max}(W)$. Initialize the iteration number $k=1$.

Let $\mu_1=\underline{\mu}$. Solve (${\rm SD}_{\mu_1}$) to obtain the optimal solution  $(\nu_1,\eta_1)$. Then, $q(\mu_{1})=\mu_1+\nu_1$ and let $LB=q(\mu_{1})$, $\mu^*=\mu_1$.

Let $\mu_2=\bar{\mu}-\epsilon$. If $\mu_2\le \underline{\mu}$, stop and return
$\mu^*$ as an approximate maximizer. Otherwise,
solve (${\rm SD}_{\mu_2}$) to obtain the optimal solution $(\nu_2,\eta_2)$. Then, $q(\mu_{2})=\mu_2+\nu_2$.
If $q(\mu_{2})>LB$,
update $LB=q(\mu_{2})$ and $\mu^*=\mu_2$. Set $k=2$  and $S=\emptyset$.

\item[Step 1.] Let $\tilde \mu= \frac{1}{2}(\mu_{1}+\mu_{2})$.
Solve (${\rm SD}_{\tilde\mu}$) and obtain the optimal solution  $(\tilde\nu,\tilde\eta)$. Then, $q(\tilde\mu)=\tilde\mu+\tilde\nu$.
If $q(\tilde \mu) > LB$, update $LB= q(\tilde \mu)$
 and $\mu^*=\tilde \mu$.
\item[Step 2.] According to Theorem \ref{thm:b},
compute the upper bounds:
\begin{eqnarray*}
UB_1= \max_{\mu\in [\mu_{1},\tilde \mu]} \bar{q}(\mu),~
UB_2= \max_{\mu\in [\tilde \mu,\mu_{2}]} \bar{q}(\mu).
\end{eqnarray*}
Update $S=S\cup\{(UB_1,\mu_{1},\tilde \mu)\}
\cup\{(UB_2,\tilde \mu,\mu_{2})\}$ and $k=k+1$.

\item[Step 3] Find $(UB^*,\mu_1,\mu_2)= \arg
\max\limits_{(t,*,*)\in S} t$.
   If $UB^*\leq LB+\epsilon$,  stop and return
$\mu^*$ as an approximate maximizer. Otherwise,  update
$S=S\setminus\{(UB^*,\mu_1,\mu_2)\}$ and go to Step 1.
\end{itemize}
\end{enumerate}

Theoretically, we can show that our new algorithm returns an
$\epsilon$-approximation optimal solution of (P$_1$) in at most
$O(\frac{1}{\epsilon})$ iterations. Here, we call $\mu^*$ an $\epsilon$-approximation
 optimal solution of (P$_1$)  if it is feasible and satisfies
\[
v({\rm P}_1)\ge q(\mu^*) \ge v({\rm P_1})-\epsilon.
\]
\begin{thm}\label{thm:comp}
The above algorithm terminates in at most $\left\lceil\frac{\bar{\mu}-\underline{\mu}}
{\epsilon}\right\rceil$ steps and returns
an $\epsilon$-approximation optimal solution of (P$_1$).
\end{thm}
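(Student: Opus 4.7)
The plan has two components matching the statement: correctness (the returned $\mu^*$ is an $\epsilon$-approximate maximizer) and the iteration count. I would dispatch correctness first, since it follows directly from the stopping rule combined with the saw-tooth upper bound $\bar q$ of Section 3. The iteration bound then follows from the length estimate of Theorem \ref{thm:b2}, which forces every bisected subinterval to have length strictly greater than $\epsilon$.

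For correctness, suppose the algorithm exits through Step 3 with $UB^* \leq LB + \epsilon$. Since $UB^*$ is the maximum of the bounds stored in $S$, every interval $[\mu_i,\mu_{i+1}] \in S$ satisfies $U_i \leq LB+\epsilon$; combined with $q(\mu) \leq \bar q(\mu) \leq U_i$ on that interval, this dominates $q$ pointwise on the union $[\underline\mu,\bar\mu-\epsilon]$. The tail $[\bar\mu - \epsilon, \bar\mu]$ is not in $S$, but Theorem \ref{thm:b2} applied at the base point $\bar\mu - \epsilon$ (already evaluated in Step 0) gives $q(\mu) \leq q(\bar\mu - \epsilon) + \epsilon \leq LB + \epsilon$. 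The degenerate early exit $\mu_2 \leq \underline\mu$ forces $\bar\mu - \underline\mu \leq \epsilon$, so Theorem \ref{thm:b2} applied at $\underline\mu$ alone delivers the $\epsilon$-approximation. In all cases, $v({\rm P}_1) \leq LB + \epsilon = q(\mu^*) + \epsilon$.

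For the iteration count, the crucial observation is Theorem \ref{thm:b2} in the form $U_i \leq q(\mu_i) + (\mu_{i+1}-\mu_i) \leq LB + (\mu_{i+1}-\mu_i)$, since $q(\mu_i) \leq LB$ once $\mu_i$ has been evaluated. Any subinterval of length at most $\epsilon$ therefore satisfies $U_i \leq LB + \epsilon$ and cannot be the triple selected by the $\arg\max$ in Step 3 while the algorithm continues. Consequently every bisected subinterval has length strictly greater than $\epsilon$. Since each Step 1--2 pass inserts exactly one breakpoint, after $N$ iterations the list partitions $[\underline\mu,\bar\mu-\epsilon]$ into $N+1$ subintervals summing in length to $\bar\mu - \underline\mu - \epsilon$, and I would finish by charging each iteration against this length budget to extract the bound $\lceil(\bar\mu-\underline\mu)/\epsilon\rceil$.

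The main obstacle is this last charging step. The algorithm bisects the interval of maximum $U_i$ rather than the longest one, so the naive halving-tree depth argument does not immediately land at the claimed constant; one really needs Theorem \ref{thm:b2} to certify that short subintervals are frozen once produced, together with a careful accounting that the total length consumed by ``active'' (length $>\epsilon$) pieces never exceeds $\bar\mu-\underline\mu$, to arrive at the stated $\lceil(\bar\mu-\underline\mu)/\epsilon\rceil$ iteration count.
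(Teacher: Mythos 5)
Your proposal follows the paper's proof essentially step for step. Correctness is handled the same way: the stopping rule $UB^*\le LB+\epsilon$ controls $q$ on every interval still in $S$ (whose union is $[\underline{\mu},\bar{\mu}-\epsilon]$), Theorem~\ref{thm:b2} applied at $\hat{\mu}=\bar{\mu}-\epsilon$ controls the tail $[\bar{\mu}-\epsilon,\bar{\mu}]$, and Theorem~\ref{thm:b2} applied at $\underline{\mu}$ handles the degenerate Step~0 exit. The iteration count likewise rests on the same observation as the paper's: $U_i\le q(\mu_i)+(\mu_{i+1}-\mu_i)\le LB+(\mu_{i+1}-\mu_i)$, so no interval of length at most $\epsilon$ is ever selected for bisection.

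The one step you flag as the ``main obstacle'' --- converting ``every bisected interval has length greater than $\epsilon$'' into the count $\lceil(\bar{\mu}-\underline{\mu})/\epsilon\rceil$ --- is precisely the step the paper does not carry out either: its proof passes directly from that observation to ``there are at most $\lceil(\bar{\mu}-\underline{\mu})/\epsilon\rceil$ elements in $S$,'' even though the elements of $S$ are the unbisected children, which are only guaranteed length greater than $\epsilon/2$. The accounting you were reaching for does close, but with a weaker constant: the leaves of the bisection tree are disjoint, cover $[\underline{\mu},\bar{\mu}-\epsilon]$, and each has length greater than $\epsilon/2$ (being half of a bisected parent), so the number of iterations, which is one less than the number of leaves, is below $2(\bar{\mu}-\underline{\mu})/\epsilon$. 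This preserves the $O(1/\epsilon)$ complexity claimed in the abstract, but the exact constant in the theorem statement is not established by either your argument or the paper's. In short, your instinct that the charging step is the weak point is well founded; it is a gap in the published proof rather than a defect of your reconstruction.
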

\begin{proof}
If the algorithm terminates at Step 0, that is,
\[
\bar{\mu}-\epsilon \le \underline{\mu},
\]
then for any $\mu\in[\underline{\mu},\bar{\mu}]$, it follows from the inequality (\ref{ub:es}) in Theorem \ref{thm:b2} that
\[
q(\mu)\le  q(\underline{\mu})+\mu-\underline{\mu}
\le
q(\underline{\mu})+\bar{\mu}-\underline{\mu}
\le q(\underline{\mu})+\epsilon.
\]
Therefore, we have
\[
q(\mu^*)=q(\underline{\mu})\ge \max_{\mu\in[\underline{\mu},\bar{\mu}]}
q(\mu)-\epsilon
=v({\rm P}_1)-\epsilon.
\]
It follows that $\mu^*=\underline{\mu}$ is an $\epsilon$-approximation
 optimal solution of (P$_1$).

Now, we suppose that the algorithm does not terminate at Step 0.
Consider $\{(UB,\mu_1,\mu_{2})\}\in S$ in the $k$-th iteration of the algorithm.
If $UB<UB^*$, then the interval $[\mu_1,\mu_{2}]$ will be not selected to partition.
In the following, we assume $UB=UB^*$.
According to the inequality (\ref{ub:es}) in Theorem \ref{thm:b2},
for any $\mu\in[\mu_1,\mu_2]$, we have
\[
UB\le  q(\mu_1)+\mu_2-\mu_1.
\]
Since $UB=UB^*$ and $q(\mu_1)\le LB$, according to the stopping criterion, the algorithm terminates when
\[
\mu_2-\mu_1\le \epsilon.
\]
Therefore, there are at most
$\left\lceil\frac{\bar{\mu}-\underline\mu}{\epsilon}\right\rceil$ elements in $S$.
Since the number of elements of $S$ increases by one
in each iteration, the algorithm stops in $\left\lceil\frac{\bar{\mu}-\underline\mu}
{\epsilon}\right\rceil$ steps.

Let $\mu^*$ be the approximation solution returned by the algorithm. We have
\begin{equation}
UB^*\leq q(\mu^*)+\epsilon. \label{ULB}
\end{equation}
To show that $\mu^*$ is an $\epsilon$-approximation optimal solution of (P$_1$), it is sufficient to prove that
\begin{equation}
q(\mu^*)\ge v({\rm P}_1)-\epsilon.\label{appr}
\end{equation}
Let $\hat{\mu}=\bar{\mu}-\epsilon>\underline{\mu}$. According to the inequality (\ref{ub:es}) in Theorem \ref{thm:b2}, for any $\mu\in[\hat{\mu},\bar{\mu}]$, we obtain
\[
q(\mu)\le  q(\hat\mu)+\mu-\hat{\mu}\le  q(\hat\mu)+\bar\mu-\hat{\mu}=
q(\hat\mu)+\epsilon.
\]
Therefore, we have
\begin{eqnarray}
v({\rm P}_1)&\le& \max\{UB^*,\max\limits_{\mu\in[\hat{\mu},\bar{\mu}]}
q(\mu)\}\nonumber\\
&\le& \max\{UB^*, q(\hat\mu)+\epsilon \}\nonumber\\
&\le&q(\mu^*)+\epsilon.\label{eqq}
\end{eqnarray}
where the equality (\ref{eqq}) follows from (\ref{ULB}). Then, we obtain (\ref{appr}). The proof is complete.
\end{proof}

\section{Computational Experiments}\label{sec:3}

We test the new branch-and-bound algorithm  for solving (P$_1$) on the same numerical examples as in \cite{NRX}. The SDP subproblems $({\rm SD}_{\mu})$ are solved by
SDPT3 within CVX \cite{Boyd}. Since there is no unified stopping criterion in the
 ``two-stage'' heuristic algorithm \cite{NRX}, we just report the number of function evaluations (i.e., solving the SDP subproblems) in the first stage, with the setting $\delta=0.05$ used in \cite{NRX}. For our algorithm, we set $\epsilon=1e-5$.


The first example is taken from [\cite{Hong}, Example 3.2]. It has many local non-global maximizers.
\begin{exam}\label{exam3}
Let $B=\left(\begin{matrix}2.3969& 0.4651 &4.6392\\ 0.4651& 5.4401& 0.7838\\ 4.6392& 0.7838 &10.1741\end{matrix}\right),\\
W=
\left(\begin{matrix}0.8077& 0.8163& 1.0970\\ 0.8163 &4.1942& 0.8457\\ 1.0970& 0.8457& 1.8810\end{matrix}\right),
D=\left(\begin{matrix}3.9104& -0.9011& -2.0128\\ -0.9011& 0.9636& 0.6102\\-2.0128 & 0.6102& 1.0908\end{matrix}\right).$
\end{exam}
In this case, $[\underline{\mu},\bar{\mu}]=[0.9882,6.7322]$.  The ``two-stage'' algorithm \cite{NRX} gives an approximation solution $\mu^*=6.5952.$
The number of function evaluations in the first stage is $116$. Our algorithm returns an $\epsilon$-approximation optimal solution, $\mu^*=6.5952$, in $141$ iterations.

The second example in \cite{NRX} is  taking from [\cite{Hong}, Example 3.1], where
the optimal solution of $({\rm P}_1)$
is achieved at the right-hand side end-point $\bar{\mu}$.
\begin{exam}\label{exam2}
$B={\rm diag}(1, 9,2),W=D={\rm diag}(5,2,3).$
\end{exam}
In this case, $[\underline{\mu},\bar{\mu}]=[0.2,4.5]$. The number of function evaluations in the first stage of the ``two-stage'' algorithm \cite{NRX} is $87$. While our algorithm finds $\mu^*=4.5$ in $2$ iterations.

\begin{exam}[\cite{NRX}, Example 3]\label{exam4} Let
$$B=\left(\begin{matrix}1& 2 &3&1\\
2&5&4&-1\\3&4&0&1\\1&-1&1&6\end{matrix}\right), W= {\rm diag}(2
,1,5,10 ),
D=\left(\begin{matrix}5&-1&0&3\\-1&9&1&0\\0&1&-2&0\\3&0&0&8\end{matrix}\right).$$
\end{exam} In this case, $[\underline{\mu},\bar{\mu}]=[-0.8241,6.0647].$ The ``two-stage'' algorithm \cite{NRX} gives an approximation solution $\mu^*=5.8748.$
The number of function evaluations in the first stage is $139$. Our algorithm returns an $\epsilon$-approximation optimal solution, $\mu^*=5.8821$, in $35$ iterations.

\begin{exam}[\cite{NRX}, Example 4]\label{exam5} Let
 $n=10, B={\rm diag}(1, 2, 8, 7, 9, 3, 10, 2, -1, 6),\\
  W={\rm diag}( 9, 8, 7, 6, 5, 4, 3, 2, 1, 10), D={\rm diag}(5, 20, 3, 4, 8, -1, 0, 6, 32, 10).$
\end{exam}
The searching interval is $[\underline{\mu},\bar{\mu}]=[-1,3.3333].$
The optimal solution is the left-hand side end-point $-1$. The number of function evaluations in the first stage is $88$. Our algorithm returns an $\epsilon$-approximation optimal solution, $\mu^*=-1$, in $18$ iterations.

\begin{exam}[\cite{NRX}, Example 5]\label{exam1} Let
 $n=20,$\\
 $B={\rm diag}(1, 2, 20, 3, 50, 4, 6, 7, 8, 9, 100, 2, 3, 4, 5, 6, 7, 0, 10, 9);$\\
 $W={\rm diag}(100, 1, 2, 30, 5, 7, 9, 7, 8, 9, 1, 2, 30, 1, 50, 8, 1, 10, 10, 9);$\\
$D={\rm diag}(0, 1000, 20, 2, 5, 6, 7, 9, 50, 3, 4, 5, 100, 5, 2, 200, 4, 5, 9, 21).$
\end{exam}
The searching interval of this example is $[\underline{\mu},\bar{\mu}]=[0,100].$
The ``two-stage'' algorithm \cite{NRX} gives an approximation solution $\mu^*=2.0029.$
The number of function evaluations in the first stage is $2001$. Our algorithm returns an $\epsilon$-approximation optimal solution, $\mu^*=1.9999$, in $22$ iterations.

In addition to Examples 2-5 reported above, our algorithm highly outperforms the ``two-stage'' algorithm \cite{NRX}. For Example 1, our algorithm is also competitive. Notice that our algorithm is an exact algorithm and
the ``two-stage'' algorithm \cite{NRX} is heuristic.

Finally, we test more examples where the data are chosen randomly as follows.
Each component of the symmetric matrices $B$ and $D$ is uniformly distributed in $[-10,10]$. We generate $W,V=LL^T+\delta I$, where $L$ is a randomly generated lower bi-diagonal matrix with each nonzero element being uniformly distributed in $[-10,10]$ and $\delta>0$ is a constant number to guarantee the positive definiteness of $W$ and $V$.  For each dimension  varying from $30$ to $200$, we independently run the ``two-stage'' algorithm \cite{NRX} and our new algorithm ten times and report in Table \ref{tab} the average numerical results including the time in seconds and the number of iterations. It follows from the limited numerical results that our new global optimization algorithm highly outperforms the  ``two-stage'' heuristic algorithm.

\begin{table}[h]\centering
\caption{
The average of the numerical results for ten times solving
 (P) with different $n$.}
\label{tab}
\begin{tabular}{p{0.3cm}p{0.01cm}p{1.0cm}p{0.8cm}p{0.01cm}p{0.8cm}p{0.6cm}p{0.01cm}p{0.8cm}p{1.5cm}}
\hline
\multirow{2}{*}{n}&\multirow{2}{*}{}&\multicolumn{2}{c}{``two-stage'' algorithm  \cite{NRX}}&&
\multicolumn{2}{c}{Our new algorithm}\\
\cline{3-4}
\cline{6-7}
&&time(s)&iter.&&time(s)&iter.\\
\hline
30	&	& 58.84  &   233.6 &	 &	11.93	&	50.1  \\
50	&	& 98.19  &   320.8 &	 &	16.80	&	58.6  \\
80	&	& 192.09 &	 400.9 &	 &	31.59	&	68.7  \\
100	&	& 299.23 &	 459.3 &	 &	44.08	&	71.4  \\
120	&	& 493.83 &	 536.9 &	 &	62.52	&	71.3  \\
150	&	& 915.29 &	 609.4 &	 &	108.95	&	75.8  \\
180	&	& 1519.09&	 634.0 &	 &  186.84	&	81.2  \\
200	&	& 2118.18&	 672.2 &	 &  262.78	&	86.6  \\
\hline
\end{tabular}
\end{table}

\section{Conclusions}
The recent SDP-based heuristic algorithm for maximizing the sum of two generalized Rayleigh quotients (SRQ) is based on the one-dimensional parametric reformulation where each functional evaluation corresponds to solving a semi-definite programming (SDP) subproblem. In this paper,  we propose an efficient branch-and-bound algorithm to globally solve (SRQ) based on the new-developed saw-tooth overestimating approach. 
It is shown to find an $\epsilon$-approximation optimal solution of (SRQ) in at most O$\left(\frac{1}{\epsilon}\right)$ iterations. Numerical results demonstrate that it is much more efficient than the recent SDP-based heuristic algorithm.

\end{document}